\documentclass[12 pt]{article}

\usepackage[utf8]{inputenc}
\usepackage{amsmath}
\usepackage{amsfonts}
\usepackage{amssymb}
\usepackage{mathtools}

\usepackage{cite}

\usepackage{graphicx}
\usepackage{caption}
\usepackage{listings}
\usepackage{verbatim}
\usepackage{wrapfig}
\usepackage{array}
\usepackage{subfigure}

\usepackage{tabularx}

\usepackage{caption}

\usepackage{amsthm}

\theoremstyle{lema}

\theoremstyle{proposition}
\newtheorem{proposition}{Proposition}[section]

\theoremstyle{theorem}
\newtheorem{theorem}{Theorem}[section]

\theoremstyle{theorem}
\newtheorem{remark}{Remark}[section]

\theoremstyle{corollary}
\newtheorem{corollary}{Corollary}[section]

\theoremstyle{definition}
\newtheorem{definition}{Definition}[section]

\theoremstyle{example}
\newtheorem{example}{Example}[section]

\def\r{\mathbb R}

\providecommand{\keywords}[1]
{
	\small	
	\textbf{\textit{Keywords---}} #1
}

\providecommand{\msc}[1]
{
	\small	
	\textbf{\textit{Mathematics Subject Classification---}} #1
}

\title{On Significant Pointwise Contractions and Annular Contractions}
\author{Ovidiu Popescu 
	\\ \small{Faculty of Mathematics and Computer Science,}\\ \small{Transilvania University of Bra\c sov, Bulevardul Eroilor 29, Bra\c sov}\\
	\small{email: {ovidiu.popescu@unitbv.ro}}}
\date{}

\begin{document}
	
	\maketitle

	\begin{abstract}
		In the current paper, we introduce the class of \textit{significant pointwise contractions}. This family of mappings extends the classical Banach contractions and also contains nonexpansive mappings. We prove that in a complete metric space every significant pointwise contraction admits a fixed point that can be approximated through Picard iterations. We provide examples showing that the newly introduced class differs from the class of piecewise contractions and is not contained in it. Moreover, we point out that significant pointwise contractions are continuous and may possess multiple fixed points. We further introduce the class of \textit{significant pointwise shrinkings}, which generalizes the class of shrinkings, and prove that any such mapping has a fixed point in a compact metric space. Moreover, we define the class of \textit{annular pointwise contractions}, which includes Banach contractions but is distinct from the class of Suzuki mappings. Finally, we obtain a fixed point theorem for annular pointwise contractions and present an additional condition ensuring the existence of fixed points.
	\end{abstract}
	
	\keywords{fixed point, contractions, Picard iteration, significant pointwise contraction, annular pointwise contractions}
	
	\msc{47H10, 47H09}

	\section{Introduction}
	
	\noindent 
	
	In 1922, Banach \cite{Banach} proved the well-known Contraction Mapping Principle which states the following:
	
	\begin{theorem}
		If $(X,d)$ is a complete metric space and $T:X\to X$ is a contraction, denoted (C), i.e. there exists a $c\in [0,1)$ such that 
		\[ d(Tx,Ty) \leq c\cdot d(x,y),\]
		for every $x,y \in X$, then $T$ has a unique fixed point, that is, there exists a unique \(x^* \in X\) such that $Tx^*=x^*$.
	\end{theorem}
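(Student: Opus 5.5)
The plan is the standard Picard-iteration argument. Fix an arbitrary $x_0\in X$ and define $x_{n+1}=Tx_n$ for $n\ge 0$. I will show three things in turn: $(x_n)$ is Cauchy, its limit is a fixed point, and that fixed point is unique. The case $c=0$ is trivial, since then $T$ is constant, so I assume $c\in(0,1)$ where convenient.

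\emph{Consecutive distances.} Applying the contraction inequality to $x=x_{n-1}$, $y=x_n$ gives $d(x_n,x_{n+1})\le c\,d(x_{n-1},x_n)$. By induction,
\[ d(x_n,x_{n+1})\le c^n d(x_0,x_1) \quad \text{for all } n. \]

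\emph{Cauchy property.} For $m>n$, the triangle inequality and the geometric series give
\[ d(x_n,x_m)\le \sum_{k=n}^{m-1} c^k d(x_0,x_1)\le \frac{c^n}{1-c}\, d(x_0,x_1). \]
The right-hand side tends to $0$ as $n\to\infty$ because $c<1$. Hence $(x_n)$ is Cauchy, and by completeness it converges to some $x^*\in X$. This is the only step that needs care. The point is that the bound on $d(x_n,x_m)$ is uniform in $m$, which is exactly what the summable geometric bound provides. This is also the step that more general contractive conditions, such as the pointwise ones studied later in the paper, have to reproduce by other means.

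\emph{Fixed point.} A contraction is Lipschitz, hence continuous. For each $n$,
\[ d(x^*,Tx^*)\le d(x^*,x_{n+1})+d(Tx_n,Tx^*)\le d(x^*,x_{n+1})+c\,d(x_n,x^*). \]
Both terms tend to $0$ as $n\to\infty$, so $d(x^*,Tx^*)=0$, that is, $Tx^*=x^*$.

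\emph{Uniqueness.} Suppose $Ty^*=y^*$ as well. Then $d(x^*,y^*)=d(Tx^*,Ty^*)\le c\,d(x^*,y^*)$, so $(1-c)\,d(x^*,y^*)\le 0$. Since $1-c>0$, this forces $d(x^*,y^*)=0$, hence $x^*=y^*$.

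As a byproduct, the Picard sequence converges to $x^*$ from every starting point $x_0$. Letting $m\to\infty$ in the Cauchy estimate gives the a priori error bound
\[ d(x_n,x^*)\le \frac{c^n}{1-c}\, d(x_0,x_1). \]
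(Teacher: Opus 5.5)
Your proof is correct and is the standard Picard-iteration argument. The paper does not prove this theorem itself (it is quoted from Banach), but its proof of Theorem \ref{Th2.25} follows exactly your skeleton, with one difference that runs against your side remark. There the geometric Cauchy estimate goes through verbatim, since condition (\ref{*}) always applies to the pair $x_{n-1},x_n$. What must be replaced is your direct fixed-point step, by a contradiction argument that applies (\ref{*}) near $x^*$. Uniqueness is dropped altogether, since (SPC) maps may have many fixed points.
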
 

	Many and various useful applications and generalizations of this theorem have been obtained e.g. Kannan \cite{Kannan}, Chatterjea \cite{Chatterjea}, Boyd-Wong \cite{BoydWong}, Reich \cite{Reich}, Ciri\'c \cite{Ciric, Ciric2}, Meir-Keeler \cite{Meir-Keeler}, Suzuki \cite{Suzuki}, Wardowski \cite{Wardowski}. In some of the generalizations, the assumption of contractiveness is relaxed to a local condition, see \cite{Clarke, Edelstein, Edelstein2, HuKirk, Jungck}.
	
	\begin{definition}
		A map $T: X \to X$ is called \textit{pointwise contractive}, denoted (PC), if for every point $x \in X$, there exists a \(c_x \in [0,1)\) and an open neighborhood of \(x\), \(U_x \subseteq X\), such that \(d(Tx,Ty) \leq c_x d(x,y)\), for all \(y \in U_x\). We say that $T$ is \textit{uniformly pointwise contractive}, denoted (uPC), provided that the same \(c \in [0,1)\) works for all \(x\in X\).
	\end{definition}

	In 1978, Hu and Kirk \cite{HuKirk} gave the following theorem, with a proof corrected by Jungck \cite{Jungck} in 1982.
	
	\begin{theorem}
		If \((X,d)\) is a rectifiably path connected complete metric space and a map \(T : X \to X\) is (uPC), then \(T\) has a unique fixed point.
	\end{theorem}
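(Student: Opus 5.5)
The plan is to show that uniform pointwise contractivity, combined with rectifiable paths, upgrades to a \emph{global} contraction with respect to a suitable complete metric, namely the path (length) metric. Then Banach's Contraction Mapping Principle stated above gives the fixed point.

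\textbf{Step 1: images of paths are shorter.} Fix a rectifiable path \(\gamma:[0,1]\to X\) of length \(L(\gamma)\). Then \(T\circ\gamma\) is rectifiable and \(L(T\circ\gamma)\le c\,L(\gamma)\).
\begin{itemize}
\item Each \(t\in[0,1]\) has an open interval \(I_t\) with \(\gamma(I_t)\subseteq U_{\gamma(t)}\), because \(\gamma\) is continuous.
\item Take a Lebesgue number for the cover \(\{I_t\}\). Any partition \(0=s_0<\dots<s_n=1\) can be refined by inserting the centers \(t\), so that every pair of consecutive points contains a center \(t\) with both points in \(I_t\).
\item For such a pair \(u<v\), the (uPC) condition at \(\gamma(t)\) gives
\[ d(T\gamma(u),T\gamma(v))\le d(T\gamma(u),T\gamma(t))+d(T\gamma(t),T\gamma(v))\le c\big(d(\gamma(u),\gamma(t))+d(\gamma(t),\gamma(v))\big). \]
\item Refining only increases polygonal sums. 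Summing and taking the supremum therefore gives \(L(T\circ\gamma)\le c\,L(\gamma)\).
\end{itemize}
The uniformity of \(c\) is exactly what makes this bound global along the path.

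\textbf{Step 2: the path metric.} Define \(\rho(x,y)=\inf\{L(\gamma):\gamma \text{ rectifiable path from } x \text{ to } y\}\). This is finite by rectifiable path connectedness, and it is a metric with \(\rho\ge d\). Step 1 gives \(\rho(Tx,Ty)\le c\,\rho(x,y)\), so \(T\) is a Banach contraction on \((X,\rho)\).

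\textbf{Step 3: obtaining the fixed point (main obstacle).} The difficulty is that \((X,\rho)\) need not be complete, so Banach's theorem cannot be applied to it directly. I would avoid this by running the Picard iteration.
\begin{itemize}
\item Pick \(x_0\) and set \(x_{n}=T^n x_0\). Then \(\rho(x_n,x_{n+1})\le c^n\rho(x_0,x_1)\).
\item Hence \(\sum_n d(x_n,x_{n+1})\le\sum_n \rho(x_n,x_{n+1})<\infty\), so \((x_n)\) is \(d\)-Cauchy.
\item By completeness of \((X,d)\), \(x_n\to x^*\) in \(d\).
\item \(T\) is continuous, since locally \(d(Tx,Ty)\le c\,d(x,y)\). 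Thus \(Tx^*=\lim x_{n+1}=x^*\).
\end{itemize}
Working with \(d\) here only needs the \(\rho\)-estimates on consecutive iterates, which is why the incompleteness of \(\rho\) does no harm.

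\textbf{Step 4: uniqueness.} Suppose \(Tx^*=x^*\) and \(Ty^*=y^*\). Then \(\rho(x^*,y^*)=\rho(Tx^*,Ty^*)\le c\,\rho(x^*,y^*)\), and \(\rho(x^*,y^*)<\infty\) by path connectedness. Hence \(\rho(x^*,y^*)=0\), and so \(x^*=y^*\) because \(\rho\ge d\).

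The delicate point is the covering and refinement argument in Step 1, which must yield a single contraction factor for whole path lengths.
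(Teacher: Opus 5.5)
The paper gives no proof of this statement. It is quoted in the introduction from Hu and Kirk (with Jungck's correction), so there is nothing in the paper to compare against. On its own merits, your architecture is a sound route: path lengths contract by the factor $c$, you pass to the induced length metric $\rho$, run Picard with $\rho$-estimates but converge in $d$, and get uniqueness via $\rho$. Steps 2--4 are fine.

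Incidentally, $(X,\rho)$ is in fact complete whenever $(X,d)$ is: along a $\rho$-Cauchy subsequence, concatenate paths of lengths $<2^{-k}$ and close the path up at the $d$-limit. So Banach could be applied to $(X,\rho)$ directly. Your Picard-in-$d$ workaround is still valid and spares you that argument.

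There is, however, a genuine gap in Step 1, exactly where you suspected. A Lebesgue number $\delta$ of $\{I_t\}$ only guarantees that a subinterval $[u,v]$ of length $<\delta$ lies in \emph{some} $I_t$. It says nothing about where $t$ is, and your estimate needs $t\in[u,v]$. If $t\notin[u,v]$, inserting $t$ does not place it between $u$ and $v$. Then the bound $c\,(d(\gamma(u),\gamma(t))+d(\gamma(t),\gamma(v)))$ is not part of a polygonal sum for $\gamma$. Summing such bounds counts the stretch of $\gamma$ between $t$ and $[u,v]$ repeatedly, with no control by $c\,L(\gamma)$. For example, if $U_{\gamma(0)}\supseteq\gamma([0,1])$ you may take $I_0=[0,1]$. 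The Lebesgue property is then witnessed by $I_0$ alone and yields no center inside subintervals away from $0$.

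What you need instead is a gauge-fine tagged partition (Cousin's lemma): every $[a,b]\subseteq[0,1]$ admits $a=u_0<\dots<u_k=b$ with tags $t_i\in[u_{i-1},u_i]$ and $u_{i-1},u_i\in I_{t_i}$. To prove it:
\begin{enumerate}
\item Let $s^*$ be the supremum of those $s\in[a,b]$ for which $[a,s]$ has such a partition.
\item Choose $\eta>0$ with $(s^*-\eta,s^*+\eta)\cap[0,1]\subseteq I_{s^*}$, and such an admissible $s>s^*-\eta$.
\item Append $[s,\min(s^*+\eta/2,b)]$ with tag $s^*$. This contradicts maximality unless $s^*=b$, and then shows $b$ itself is admissible.
\end{enumerate}
Apply this on each $[s_{j-1},s_j]$ of the given partition and insert the tags; this produces the refinement you want. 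Your triangle-inequality estimate then sums to $c$ times a polygonal sum of $\gamma$, and the rest of your proof goes through.
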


	Recall, that a metric space \((X,d)\) is \textit{rectifiably path connected} if for any two points \(x,y \in X\), there exists a continuous map \(p: [0,1] \to X\) satisfying \(p(0)=x\), \(p(1)=y\) and having a finite length $l(p)$ defined as the supremum over all numbers:
	\[ \sum_{i=1}^{n} d(p(t_i),p(t_{i-1})),\]
	where \(0=t_0<t_1<t_2< \dots < t_n =1.\)
	
	In \cite{Munkres}, Munkres provides an example of a (PC) map \(T:\r \to \r \), defined by \(Tx= \dfrac12 (x+\sqrt{x^2+1})\), for every \(x \in \r\), without fixed or even periodic points (i.e. \(T^{(n)} = T\circ T \circ \dots \circ T\) has no fixed points for every \(n \geq 1\)).
	
	In 2016, Ciesielski and Jasinski \cite{Cieselski} proved that the fixed point result with the weaker requirement of \(T\) being (PC) remains true when we additionally assume that \(X\) is compact.
	
	\begin{theorem}[see Theorem 1.4 from \cite{Cieselski}]
		Assume that \((X,d)\) is a compact and rectifiably path connected metric space. If \(T : X \to X \) is a (PC), then \(T\) has a unique fixed point. 
	\end{theorem}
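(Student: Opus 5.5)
The plan is to use compactness to produce a candidate fixed point, and then rectifiable path connectedness to prove uniqueness. The same path argument also shows that $T$ is nonexpansive, which is what lets the minimization step go through.

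\textbf{Step 1: $T$ does not increase path length.} Fix $x,y\in X$ and a rectifiable path $p$ from $x$ to $y$. For each $t\in[0,1]$, continuity of $p$ and the pointwise contractivity of $T$ at $p(t)$ give an open interval $I_t\ni t$ with $p(I_t)\subseteq U_{p(t)}$. Then $d(Tp(t),Tp(s))\le c_{p(t)}\,d(p(t),p(s))$ for all $s\in I_t$.

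By compactness of $[0,1]$ and a Lebesgue number argument, choose a partition $0=t_0<\dots<t_n=1$ with the following property: for each $i$ there is a point $s_i\in[t_{i-1},t_i]$ such that both $t_{i-1}$ and $t_i$ lie in $I_{s_i}$. Insert the points $s_i$ into the partition. Then
\[d(Tp(t_{i-1}),Tp(t_i))\le d(Tp(t_{i-1}),Tp(s_i))+d(Tp(s_i),Tp(t_i))\le d(p(t_{i-1}),p(s_i))+d(p(s_i),p(t_i)),\]
and summing over $i$ gives
\[d(Tx,Ty)\le \sum_i d(Tp(t_{i-1}),Tp(t_i))\le l(p).\]
The same local estimate also shows that $T\circ p$ is continuous. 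Since every refinement of such a partition still has this property, the estimate holds for all partitions fine enough, so in fact $l(T\circ p)\le l(p)$.

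\textbf{Step 2: existence.} First, $T$ is continuous: the (PC) condition at $x$ gives $d(Tx,Ty)\le c_x d(x,y)$ for $y\in U_x$. Hence $f(x)=d(x,Tx)$ is continuous on the compact space $X$, and it attains its minimum at some $x_0$.

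Suppose $x_0\ne Tx_0$. Since $Tx_0$ is close to $x_0$ only in the sense of the path metric, I avoid a direct comparison and argue through the path metric instead. Define $\rho(x,y)=\inf\{l(p): p \text{ a path from } x \text{ to } y\}$. This is finite because $X$ is rectifiably path connected, and $\rho\ge d$. Step 1 shows $\rho(Tx,Ty)\le\rho(x,y)$.

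\textbf{Step 3: the main obstacle, and what I will try.} The difficulty is converting the strict local contraction into a strict global decrease of $\rho(x,Tx)$. Take a path $p$ from $x_0$ to $Tx_0$ with length close to $\rho(x_0,Tx_0)$. Near the endpoint $x_0$, on a small initial segment inside $U_{x_0}$, the contraction factor $c_{x_0}<1$ gives a definite saving in the length of $T\circ p$. This saving should be proportional to the length of that segment, so it is not lost in the $\varepsilon$ of near-minimality.

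Concretely, I plan to use instead the function $g(x)=\rho(x,Tx)$ and work with minimizing sequences. The argument needs compactness in the $\rho$-topology, or at least lower semicontinuity of $g$ with respect to $d$. Rather than fight this, I would first try the simpler route suggested by the Hu–Kirk/Jungck approach. Compactness gives a uniform structure: a Lebesgue number $\delta$ for the cover $\{U_x\}$ and, on finitely many balls, a uniform constant $c=\max c_{x_j}<1$. Up to adjusting neighborhoods, this makes $T$ behave like a (uPC) map at scale $\delta$ with respect to the metric $\rho$. Replacing $d$ by $\rho$, which is complete, and applying the Hu–Kirk theorem (Theorem 1.2) then yields existence. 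The delicate verification is that the local contraction at scale $\delta$ genuinely holds with a single $c$ (pairs within $\delta$ lie in some $U_{x_j}$, but the estimate is centered at $x_j$, so the triangle inequality costs a factor; I expect to need the path-splitting of Step 1 to fix this).

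\textbf{Step 4: uniqueness.} If $u=Tu$ and $v=Tv$ with $u\ne v$, apply Step 1 on a nearly minimal path between $u$ and $v$ together with the strict factor $c_u<1$ near $u$. This gives $\rho(u,v)=\rho(Tu,Tv)<\rho(u,v)$, a contradiction.
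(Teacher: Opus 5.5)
Existence is not established, and the route you settle on in Step 3 cannot be completed. Taking a finite subcover $U_{x_1},\dots,U_{x_m}$ and $c=\max_j c_{x_j}$ does not make $T$ (uPC). The (PC) inequality is anchored at the centre, so for $y,z\in U_{x_j}$ it only yields $d(Ty,Tz)\le c\,(d(y,x_j)+d(x_j,z))$, which is not controlled by $d(y,z)$. No path-splitting can repair this, because (PC) does not imply (uPC) even on $X=[0,1]$, where $\rho=d$. Take $g(x)=\tfrac12(1-x)(1+\cos(1/x))$ for $x>0$ and $g(0)=\tfrac12$. The map $f(x)=\int_0^x g$ sends $[0,1]$ into $[0,\tfrac12]$ and is differentiable with $0\le f'=g<1$ at every point, hence (PC) with $c_x=(1+g(x))/2$. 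But $g(1/(2\pi k))\to 1$, so no single $c<1$ works, and Hu--Kirk does not apply. The paper only quotes this theorem from Ciesielski--Jasinski and gives no proof of it, so your argument has to stand on its own.

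Step 4 has a related gap, and so does the remark in Step 3 that the saving near $x_0$ is ``proportional to the length of that segment''. (PC) at $u$ controls only $d(u,Ty)$ for $y$ near $u$. It says nothing about $\rho(u,Ty)$ or the length of $T\circ p$ along the initial segment. So with merely $\varepsilon$-minimal paths $p_\varepsilon$, nothing prevents $l(p_\varepsilon)-l(T\circ p_\varepsilon)\to 0$, and you only get $\rho(Tu,Tv)\le\rho(u,v)$.

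The approach you set aside, minimizing $x\mapsto\rho(x,Tx)$, is the one that works, but it needs two ingredients you did not supply.

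First, you need a strict inequality $l(T\circ p)<l(p)$ for every nonconstant rectifiable $p$. Parametrize $p$ by arc length on $[0,L]$ and set $E_n=\{s:\ d(Tp(s),Tp(s'))\le(1-\tfrac1n)|s-s'| \text{ whenever } |s-s'|<\tfrac1n\}$. These sets are closed by continuity of $T\circ p$ and cover $[0,L]$ by (PC). By Baire, some $E_n$ contains an interval $J$ of length less than $\tfrac1n$. On $J$ the path $T\circ p$ is $(1-\tfrac1n)$-Lipschitz in the parameter, so $l(T\circ p|_J)<|J|=l(p|_J)$, while on the remaining pieces $l(T\circ p)\le l(p)$ by Step 1.

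Second, you need compactness of $(X,d)$, used via Arzel\`a--Ascoli on constant-speed parametrizations of near-minimal paths. This shows that any two points are joined by a \emph{shortest} path, and that $\rho$ is lower semicontinuous with respect to $d$.

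Then $x\mapsto\rho(x,Tx)$ is $d$-lower semicontinuous and attains its minimum at some $x_0$. If $x_0\neq Tx_0$, a shortest path $p$ from $x_0$ to $Tx_0$ gives $\rho(Tx_0,T^2x_0)\le l(T\circ p)<l(p)=\rho(x_0,Tx_0)$, a contradiction. Uniqueness follows the same way from a shortest path joining two fixed points.

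A small point in Step 1: refinements of your special partition need not keep its property. Instead, apply $d(Tp(a),Tp(b))\le l(p|_{[a,b]})$ to each interval of an arbitrary partition and sum.
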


	Moreover, they gave a similar result without the assumption that the space is rectifiably path connected.
	
	\begin{definition}[see Definition 1.5 from \cite{Cieselski}]
		A map \(T : X \to X \) is \textit{locally contractive}, denoted (LC), if for every \(x \in X\) there exist \(c_x \in [0,1)\) and $\varepsilon_x > 0$ such that the restriction of \(T\) to the open ball centred at \(x\) and of radius \(\varepsilon_x\) (\(B(x,\varepsilon_x)\)) is a contraction with constant \(c_x\). Moreover, \(T\) is \textit{uniformly locally contractive} (uLC), if the same $\lambda$ and $\varepsilon$ work for all $x \in X$, i.e. $T$ is (\(\varepsilon, \lambda\))-(uLC).
 	\end{definition}
		
	\begin{theorem}[see Theorem 1.6 from \cite{Cieselski}]
		Assume that \((X,d)\) is complete and \(T : X \to X \) is (uLC).
		\begin{itemize}
			\item[i)] If \(X\) is connected, then \(T\) has a unique fixed point.
			\item[ii)] If \(X\) has a finite number of components, then \(T\) has a periodic point.
		\end{itemize}
	\label{Th1.6}
	\end{theorem}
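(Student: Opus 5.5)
The plan for part i) is to prove first that $T$ is a contraction for an equivalent \emph{chain metric}, and then to apply the Contraction Mapping Principle of Banach. Suppose $T$ is $(\varepsilon,\lambda)$-(uLC). Call a finite sequence $x=z_0,z_1,\dots,z_n=y$ an $\varepsilon$-chain if $d(z_{i-1},z_i)<\varepsilon$ for every $i$, and put
\[ \rho(x,y)=\inf\Big\{\sum_{i=1}^{n} d(z_{i-1},z_i) : (z_i)_{i=0}^{n} \text{ is an } \varepsilon\text{-chain from } x \text{ to } y\Big\}. \]
For any fixed $x$, the set of points reachable from $x$ by an $\varepsilon$-chain is open. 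Its complement is also open, since it is a union of $\varepsilon$-balls. Because $X$ is connected, every pair of points can therefore be joined by an $\varepsilon$-chain, so $\rho$ is finite. It is clearly a metric, with $\rho\ge d$, and $\rho=d$ whenever $d(x,y)<\varepsilon$. Hence $\rho$ and $d$ have the same Cauchy sequences eventually (they agree at small scales), and $(X,\rho)$ is complete.

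The next step is the contraction estimate. Take an $\varepsilon$-chain $(z_i)$ from $x$ to $y$. Each consecutive pair $z_{i-1},z_i$ lies in $B(z_{i-1},\varepsilon)$, so $d(Tz_{i-1},Tz_i)\le \lambda d(z_{i-1},z_i)<\varepsilon$. Thus $(Tz_i)$ is an $\varepsilon$-chain from $Tx$ to $Ty$, and its $d$-length is at most $\lambda$ times the length of $(z_i)$. Taking the infimum gives $\rho(Tx,Ty)\le\lambda\rho(x,y)$. Banach's theorem in $(X,\rho)$ then yields a unique fixed point, and since $\rho$ and $d$ induce the same topology, this is the conclusion we want. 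The step I expect to need the most care is the completeness of $\rho$, which requires checking that a $d$-Cauchy sequence has $d$-small tails, and hence $\rho$-small tails.

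For part ii), the idea is to reduce to part i). Let $C_1,\dots,C_k$ be the components of $X$. Each $C_j$ is closed, and therefore complete. Since $T$ is continuous, $T(C_j)$ is connected, so it lies inside a single component $C_{\sigma(j)}$. This defines a self-map $\sigma$ of the finite set $\{1,\dots,k\}$, so some $j$ satisfies $\sigma^m(j)=j$ for some $m\ge1$. Then $T^m$ maps $C_j$ into itself.

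It remains to check that $T^m|_{C_j}$ is (uLC). Take $x$ and $y$ with $d(x,y)<\varepsilon$. Applying the estimate above repeatedly, the images $T^ix, T^iy$ stay within distance $\lambda^i d(x,y)<\varepsilon$ of each other, and so $d(T^mx,T^my)\le\lambda^m d(x,y)$. Hence $T^m|_{C_j}$ is $(\varepsilon,\lambda^m)$-(uLC) on the connected complete space $C_j$. By part i) it has a fixed point, and that fixed point is a periodic point of $T$.
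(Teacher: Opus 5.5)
The paper does not prove this theorem. It is quoted from Ciesielski and Jasinski; part i) is essentially Edelstein's theorem on $\varepsilon$-chainable spaces. So there is no argument in the paper to compare yours with, and your proposal has to stand on its own.

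It does stand. Part i) is the standard chain-metric proof, and each step is justified:
\begin{itemize}
\item connectedness gives $\varepsilon$-chainability;
\item $\rho$ is a metric with $\rho\ge d$ and $\rho=d$ below scale $\varepsilon$, so $(X,\rho)$ is complete;
\item $T$ maps $\varepsilon$-chains to $\varepsilon$-chains, so it is a $\lambda$-contraction for $\rho$;
\item Banach's principle then gives existence and uniqueness.
\end{itemize}
You do not even need the two topologies to agree. Being a fixed point does not depend on the metric, so only the completeness of $(X,\rho)$ matters.

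Part ii) is also correct, with one small imprecision. What you verify for $T^m$ is the pairwise condition $d(x,y)<\varepsilon \Rightarrow d(T^mx,T^my)\le\lambda^m d(x,y)$. The paper's definition of $(\varepsilon,\lambda)$-(uLC) asks for the estimate on whole balls $B(x,\varepsilon)$, and two points of such a ball can be almost $2\varepsilon$ apart. Literally, then, you have shown that $T^m|_{C_j}$ is $(\varepsilon/2,\lambda^m)$-(uLC), which is just as good for applying part i). Alternatively, your proof of part i) only uses the pairwise condition, for consecutive points of a chain, so it applies verbatim to $T^m|_{C_j}$. The remaining ingredients are correct: $T$ is continuous because it is Lipschitz on each ball, components are closed and hence complete, and the self-map $\sigma$ of a finite set has a periodic index.
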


	The following examples show that the map \(T\) in Theorem \ref{Th1.6} needs to have neither periodic points, when \(X\) has an infinite number of components, nor fixed points when \(X\) is disconnected with a finite number of components.
	
	\begin{example}[see Example 1.7 from \cite{Cieselski}]
		Consider \[ X  = \underset{n \geq 1}{\cup} [n, n+2^{-(n+1)}]\] with the standard distance and define \(T : X \to X \) by \[Tx = n+1+\dfrac{x-n}2,\] for every \(n \geq 1\) and \(x \in [n, n+2^{-(n+1)}]\). Then \(X\) is complete, while \(T\) is a \((\frac12,\frac12)\)-(uLC) map with no periodic points.
	\end{example}

	\begin{example}[see Example 1.8 from \cite{Cieselski}]
		Consider \[ X  = [-2,-1]\cup[1,2]\] with the standard distance and define \(T : X \to X \) by \[Tx =-\dfrac{x}{|x|},\]  \(x \in X\). Then \(X\) is compact, while \(T\) is a (uLC) map with no fixed points.
	\end{example}

	However, for compact spaces the periodic points must exist, even when (uLC) is weakened to the following notions.
	
	\begin{definition}[see Definition 4.1 from \cite{Cieselski}]
		A map \(T : X \to X \) is \textit{locally shrinking}, denoted (LS), provided for every \(x \in X\) there exist $\varepsilon_x > 0$ such that the restriction of \(T\) to \(B(x,\varepsilon_x)\) is shrinking (S), i.e. \(d(Tu,Tv) < d(u,v)\), for every \(u,v \in B(x,\varepsilon_x)\). Moreover, \(T\) is \textit{uniformly locally shrinking} (uLS), if the same $\varepsilon$ works for all $x \in X$.
	\end{definition}

	In 1962, Edelstein \cite{Edelstein} gave the following result for (uLS):
	
	\begin{theorem}[see Theorem 2 from \cite{Edelstein}]
		If \((X,d)\) is compact and \(T : X \to X \) is (uLS), then \(T\) has a periodic point.
	\label{Th1.10}
	\end{theorem}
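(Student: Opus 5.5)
Here $\varepsilon>0$ is the uniform radius from (uLS), so that $T$ is shrinking on every open ball of radius $\varepsilon$. The plan is to use compactness twice: once to find a point whose orbit returns into a small ball around itself, and once to minimise a distance function on an orbit closure. First I would record that $T$ is continuous, since on $B(x,\varepsilon)$ we have $d(Tu,Tx)<d(u,x)$. Then fix any $x_0\in X$ and let $\omega$ be the $\omega$-limit set of its orbit. It is nonempty, compact and satisfies $T(\omega)=\omega$. Restricting $T$ to $\omega$ preserves (uLS) with the same $\varepsilon$, so we may replace $X$ by $\omega$. On $\omega$ every point is a limit of iterates of $x_0$. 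Hence for any $p\in\omega$ there are large $n<m$ with $T^n x_0$ and $T^m x_0$ both within $\varepsilon/4$ of $p$. I would like to deduce that some $k\geq 1$ and some $q\in\omega$ satisfy $d(T^k q,q)<\varepsilon/2$. To get this, cover $\omega$ by finitely many balls of radius $\varepsilon/4$ and use pigeonhole on the points $T^j q$, $j=0,\dots,N$.

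The key step is to consider $g(y)=d(T^k y,y)$ on the compact set $K=\{y\in\omega: g(y)\le \varepsilon/2\}$, which is nonempty and closed. The aim is to show that $g$ attains the value $0$ there, which gives a periodic point. Since $T^k$ is continuous, $g$ attains its minimum on $K$ at some $y^*$. Suppose $g(y^*)=\delta>0$. Then $y^*$ and $T^k y^*$ lie in a common ball of radius $\varepsilon$, so $d(Ty^*,T^{k+1}y^*)<\delta$. This means $g(Ty^*)<\delta$, and $Ty^*\in K$ because $\delta\le\varepsilon/2$. That contradicts minimality. So $T^k y^*=y^*$, and $y^*$ is a periodic point of $T$.

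The main obstacle is the pigeonhole step, namely producing a point $q$ and an exponent $k$ with $d(T^kq,q)<\varepsilon/2$. Any two points of the orbit lying in the same small ball are of the form $T^i q$ and $T^j q$, and this gives $d(T^iq,T^jq)<\varepsilon/2$. Taking $q'=T^iq$ and $k=j-i$ then works, since $q'\in\omega$ by invariance. So this obstacle is in fact mild. The remaining care is to check that the strict decrease applies only when both points lie in one $\varepsilon$-ball, which is exactly what the threshold $\varepsilon/2$ guarantees.
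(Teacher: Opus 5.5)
Your proof is correct and complete. The paper gives no proof of this statement; it only quotes it from Edelstein. So the useful comparison is with the classical orbit argument, which is essentially Edelstein's, and your route is genuinely different from it.

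The classical argument works along one orbit:
\begin{itemize}
\item Take $T^{n_i}x\to\xi$, choose $i_0$ with $d(T^{n_i}x,\xi)<\varepsilon/4$ for $i\ge i_0$, and set $k=n_{i_0+1}-n_{i_0}$.
\item Then $r_n=d(T^nx,T^{n+k}x)$ is nonincreasing for $n\ge n_{i_0}$, since each such $r_n$ is below $\varepsilon$.
\item Continuity gives $d(\xi,T^k\xi)=r=d(T\xi,T^{k+1}\xi)$ for $r=\lim r_n$, which forces $r=0$.
\end{itemize}
That version proves more: every cluster point of every orbit is periodic. It also needs only one orbit with a convergent subsequence, not compactness of $X$.

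You replace the monotone sequence by minimising $g(y)=d(T^ky,y)$ over the compact sublevel set $\{g\le\varepsilon/2\}$. This is in the spirit of the usual compact-space proof for contractive maps, where one minimises $d(x,Tx)$. Your argument is short and clean, and your pigeonhole step even bounds the exponent $k$ by the number of $\varepsilon/4$-balls needed to cover $X$. The price is that it genuinely uses compactness of that sublevel set.

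Two streamlining remarks:
\begin{itemize}
\item The passage to the $\omega$-limit set is unnecessary. The pigeonhole step and the minimisation work verbatim on $X$ itself, because $X$ is compact and $T(X)\subseteq X$. Your remark about $T^nx_0$ and $T^mx_0$ lying near $p$ is never used.
\item Any threshold strictly below $\varepsilon$ would do in place of $\varepsilon/2$. The condition $d(y^*,T^ky^*)<\varepsilon$ already puts both points in $B(y^*,\varepsilon)$.
\end{itemize}
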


	Ciesielski and Jasinski \cite{Cieselski} showed that Theorem \ref{Th1.10} remains true when \(T\) is assumed to be only (LS).
	
	\begin{proposition}[see Proposition 4.3 from \cite{Cieselski}]
		(LS) implies (uLS) for maps \(T : X \to X \), with compact $X$.
	\end{proposition}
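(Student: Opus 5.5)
We argue by contradiction, using a Lebesgue-number type argument. Suppose $T$ is (LS) on the compact space $X$ but not (uLS). Then for every $n\geq 1$ the radius $\varepsilon = 1/n$ fails. So there exist a point $x_n\in X$ and two points $u_n,v_n\in B(x_n,1/n)$ with $u_n\neq v_n$ and
\[ d(Tu_n,Tv_n)\geq d(u_n,v_n). \]
The pair must be distinct, since the shrinking inequality is only required for $u\neq v$; for $u=v$ both sides vanish. We have $d(u_n,v_n)<2/n$.

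By compactness of $X$ we pass to a subsequence along which $x_n\to x$ for some $x\in X$. Since $d(u_n,x_n)<1/n$ and $d(v_n,x_n)<1/n$, we also get $u_n\to x$ and $v_n\to x$.

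Now apply (LS) at the limit point $x$. This gives $\varepsilon_x>0$ such that $T$ restricted to $B(x,\varepsilon_x)$ is shrinking. For $n$ large enough, both $u_n$ and $v_n$ lie in $B(x,\varepsilon_x)$. Since $u_n\neq v_n$, the shrinking property gives $d(Tu_n,Tv_n)<d(u_n,v_n)$. This contradicts the choice of $u_n,v_n$. Hence some $\varepsilon=1/n$ works uniformly, and $T$ is (uLS).

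There is no real obstacle in this argument. The only delicate point is setting up the negation of (uLS) correctly. Note that $\varepsilon$ here is a radius of balls centred at arbitrary points, which is why we need the auxiliary centres $x_n$ rather than just a pair of nearby points. Continuity of $T$ is not needed, since we never take limits of $Tu_n$ or $Tv_n$. Only sequential compactness of $X$ is used, which holds in compact metric spaces.
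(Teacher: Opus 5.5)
Your proof is correct. The negation of (uLS) is set up properly, including the requirement $u_n\neq v_n$: the shrinking inequality only makes sense for distinct points. Sequential compactness plus (LS) at the limit point $x$ then gives the contradiction.

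The paper gives no proof of this proposition; it only quotes it from Ciesielski and Jasinski. The standard argument uses the Lebesgue number lemma for the open cover $\{B(x,\varepsilon_x) : x\in X\}$. If $\delta$ is a Lebesgue number, any $\varepsilon<\delta/2$ works: each ball $B(y,\varepsilon)$ has diameter less than $\delta$, so it lies in some $B(x,\varepsilon_x)$, and a restriction of a shrinking map is still shrinking.

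Your contradiction argument is exactly the sequential-compactness proof of the Lebesgue number lemma, unpacked in this setting, so the two routes are essentially the same. The Lebesgue-number version is shorter and gives the uniform radius directly rather than by contradiction. Yours is self-contained.
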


	\begin{corollary}[see Corollary 4.4 from \cite{Cieselski}]
		If \((X,d)\) is compact and \(T : X \to X \) is (LS), then \(T\) has a periodic point.
	\end{corollary}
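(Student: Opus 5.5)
We plan to derive the Corollary from the two results stated just before it, by reducing the (LS) hypothesis to the uniform one and then applying Edelstein's result. Concretely, let \((X,d)\) be compact and let \(T\) be (LS). By the Proposition (Proposition 4.3 from \cite{Cieselski}), \(T\) is then (uLS). Since \(X\) is compact, Theorem \ref{Th1.10} applies and yields a periodic point of \(T\). Formally, the Corollary needs nothing more than these two steps.

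If instead we want a self-contained argument, we would prove both ingredients. For the Proposition, we would apply the Lebesgue number lemma to the open cover \(\{B(x,\varepsilon_x)\}_{x\in X}\) of the compact space \(X\). This gives an \(\varepsilon>0\) such that every set of diameter less than \(\varepsilon\) lies inside some \(B(x,\varepsilon_x)\). Taking \(\varepsilon/2\) as the uniform radius, every ball \(B(y,\varepsilon/2)\) sits inside some \(B(x,\varepsilon_x)\), where \(T\) is shrinking, so \(T\) is (uLS).

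For Edelstein's theorem, we would use the function \(f(x)=d(x,Tx)\) restricted to a suitable compact invariant set, or alternatively \(\liminf\) arguments along orbits. A cleaner route goes through the \(\omega\)-limit set. Take any \(x\) and a point \(z\) in the \(\omega\)-limit set of its orbit. Because \(T\) is (uLS), the set of \(\varepsilon\)-close pairs is mapped into itself with strictly decreasing distance. Choose indices \(n_i<m_i\) with \(T^{n_i}x\) and \(T^{m_i}x\) both converging to \(z\). For large \(i\) the distances \(d(T^{n_i+k}x,T^{m_i+k}x)\) are nonincreasing in \(k\) and below \(\varepsilon\). The limits then give \(d(T^kz,T^{k+p}z)\), which we want to show is eventually zero for the period \(p=m_i-n_i\). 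Strict shrinking together with compactness, applied through continuity of \(d(u,Tu)\)-type functions at a minimizing pair, forces the equality.

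The main obstacle is this last step: the period \(m_i-n_i\) varies with \(i\). We would first try to fix it by noting that the \(\varepsilon\)-close pairs along the orbit form a set where \(T^p\) is shrinking for each fixed \(p\). We would then minimize \(u\mapsto d(u,T^pu)\) over the compact \(\omega\)-limit set, choosing \(p\) as the minimal return index for which this function takes a value below \(\varepsilon\). Strict shrinking at a minimizer forces the minimum to be \(0\), which gives \(T^pu=u\).
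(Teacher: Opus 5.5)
Your first paragraph is exactly how the corollary follows in the paper, which gives no separate proof and cites Ciesielski and Jasinski: Proposition 4.3 upgrades (LS) to (uLS) on compact $X$, and Edelstein's Theorem~\ref{Th1.10} then yields a periodic point, so the proposal is correct. In the optional self-contained sketch, two small fixes are worth making. First, use radius $\varepsilon/3$ in the Lebesgue step, since a ball of radius $\varepsilon/2$ can have diameter exactly $\varepsilon$. Second, the varying-period worry is spurious: pigeonhole on a single orbit gives one fixed $p$ and a point $w$ with $d(w,T^pw)<\varepsilon$, and minimizing $u\mapsto d(u,T^pu)$ over all of $X$ then finishes the argument by your strict-shrinking step.
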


	In 2008, Suzuki \cite{Suzuki} proved the following theorem:
	
	\begin{theorem}[see Theorem 2 from \cite{Suzuki}]
		Let \((X,d)\) be a complete metric space and \(T\) be a mapping on \(X\). Define a nonincreasing function $\theta$ from $[0,1)$ to $(\frac12,1]$ by
		\[ \theta(r) = \begin{cases}
			1, \qquad\qquad\quad  \text{if  } \quad 0 \leq r \leq (\sqrt{5}-1)/2,\\
				(1-r)r^{-2}, \quad \text{if  } \quad (\sqrt{5}-1)/2 \leq r \leq 2^{-1/2},\\
					(1+r)^{-1}, \;\;\quad \text{if  } \quad 2^{-1/2} \leq r < 1.\\
		\end{cases}\]
		Assume that there exists $r \in [0,1)$ such that 
		\[\theta(r) d(x,Tx) \leq d(x,y) \quad \text{implies} \quad d(Tx,Ty) \leq r d(x,y),\]
		for all \(x,y \in X\). Then, there exists a unique fixed point \(z\)  of \(T\). Moreover, 
		\[\lim\limits_{n\to \infty} T^nx = z,\]
		for all $x \in X$.
	\label{Th1.13}
	\end{theorem}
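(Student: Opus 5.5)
The plan is the standard Suzuki argument. It splits into three parts: show that Picard orbits are Cauchy, show that the limit is a fixed point through a separate estimate, and show uniqueness. Fix $x\in X$ and put $x_n=T^nx$.

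\textbf{Step 1 (orbit is Cauchy).} Since $\theta(r)\le 1$, we have $\theta(r)d(x,Tx)\le d(x,Tx)$. Applying the hypothesis with $y=Tx$ gives $d(Tx,T^2x)\le r\,d(x,Tx)$, and by induction
\[ d(x_n,x_{n+1})\le r^n d(x,Tx). \]
Hence $\sum_n d(x_n,x_{n+1})<\infty$, the sequence $(x_n)$ is Cauchy, and completeness gives a limit $z$.

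\textbf{Step 2 (key inequality).} The main work is to show that for every $y\ne z$ we have $\theta(r)d(x_n,Tx_n)\le d(x_n,z)$ for infinitely many $n$, or an analogous statement valid for every $y$. I would prove Suzuki's auxiliary claim
\[ d(Ty,z)\le r\,d(y,z)\quad\text{for all } y\neq z. \]
Since $d(x_n,Tx_n)\to 0$ and $d(x_n,y)\to d(z,y)>0$, the condition $\theta(r)d(x_n,Tx_n)\le d(x_n,y)$ holds for all large $n$. This gives $d(x_{n+1},Ty)\le r\,d(x_n,y)$, and letting $n\to\infty$ yields the claim.

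\textbf{Step 3 (the limit is fixed).} Assume $Tz\ne z$. The claim gives $d(T^{n+1}z,z)\le r\,d(T^nz,z)$ as long as the iterates differ from $z$, so $d(T^nz,z)\le r^{n-1}d(Tz,z)$ for $n\ge1$. (If some $T^nz=z$ with $n\ge 2$, apply the claim to $y=T^{n-1}z\ne z$ to reach a contradiction.) The next task is to deduce $d(T^2z,Tz)$ type bounds and a contradiction from the triangle inequality. This is the step that depends on the three regimes of $\theta$, and I expect it to be the main obstacle, since the exact thresholds come from sharp inequalities.
\begin{itemize}
\item For $r\le(\sqrt5-1)/2$, I would show $\theta(r)d(T^nz,T^{n+1}z)\le d(T^nz,z)$ fails for at most finitely many $n$. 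If instead $d(T^nz,z)<d(T^nz,T^{n+1}z)$ and the same holds at $n+1$, then
\[ d(z,Tz)\le d(z,T^nz)+d(T^nz,T^{n+1}z)+\dots \]
together with $r^2+r<1$ gives a contradiction.
\item For the other regimes, I would follow Suzuki's case analysis. Assume $\theta(r)d(T^nz,T^{n+1}z)>d(T^nz,z)$ for both $n=1$ and $n=2$ (or at two consecutive indices). Then derive
\[ d(Tz,z)\le d(Tz,T^2z)+d(T^2z,z)<\bigl(\theta(r)^{-1}\text{-weighted terms}\bigr)\le d(Tz,z), \]
using $\theta(r)=(1-r)r^{-2}$ or $(1+r)^{-1}$ exactly where it makes the inequality tight.
\end{itemize}
So some index $n$ satisfies the hypothesis with $y=z$, and then $d(T^{n+1}z,Tz)\le r\,d(T^nz,z)$. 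Combining this with Step 2 and passing to the limit over a suitable subsequence along which the condition holds forces $Tz=z$.

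\textbf{Step 4 (uniqueness and convergence).} If $Tw=w$, then $\theta(r)d(w,Tw)=0\le d(w,z)$, so $d(w,z)=d(Tw,Tz)\le r\,d(w,z)$, which gives $w=z$. Finally, the convergence $T^nx\to z$ holds for every starting point $x$, because each Picard limit is a fixed point and the fixed point is unique.
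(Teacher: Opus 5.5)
The paper only quotes this theorem from Suzuki and gives no proof, so the benchmark is Suzuki's original argument. Your Steps 1, 2 and 4 match it. Step 3, however, is the whole content of the theorem, and you do not carry it out; the finishing mechanism you describe also fails for $r<2^{-1/2}$.

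Two concrete problems come first. Your parenthetical about periodic points is wrong. If $T^nz=z$ and $y=T^{n-1}z\neq z$, the claim only gives $0=d(T^nz,z)\le r\,d(T^{n-1}z,z)$, which contradicts nothing. The periodic case has to be handled with Step 1 instead, e.g. $d(z,Tz)=d(T^{kn}z,T^{kn+1}z)\le r^{kn}d(z,Tz)$ for all $k$. Second, for $r<2^{-1/2}$ you have no way to produce infinitely many indices with $\theta(r)d(T^nz,T^{n+1}z)\le d(T^nz,z)$, so ``passing to the limit over a subsequence'' is not available. Your regime-1 argument does not show that failures are finite:
\begin{itemize}
\item Two consecutive failures, combined via $d(T^nz,T^{n+1}z)\le d(T^nz,z)+d(z,T^{n+1}z)$, only give $d(T^nz,T^{n+1}z)<\theta(r)(1+r)\,d(T^nz,T^{n+1}z)$. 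This is no contradiction when $\theta(r)=1$.
\item The chain $d(z,Tz)\le d(z,T^nz)+\sum_{k=1}^{n-1}d(T^kz,T^{k+1}z)$ only gives the factor $r+r^2+\dots+r^n$. For $n\ge 3$ and $r$ near $(\sqrt5-1)/2$ this factor exceeds $1$.
\end{itemize}

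What actually works for $r<2^{-1/2}$ is a single index, $n=2$. If the condition failed there, then $d(z,Tz)\le d(z,T^2z)+d(T^2z,Tz)<(\theta(r)r^2+r)\,d(z,Tz)\le d(z,Tz)$; note that $\theta(r)r^2+r\le 1$ is exactly $\theta(r)\le(1-r)r^{-2}$. The hypothesis with $x=T^2z$, $y=z$ then gives $d(T^3z,Tz)\le r\,d(T^2z,z)$. The contradiction comes from a one-shot estimate, not from a limit:
\[ d(z,Tz)\le d(z,T^3z)+d(T^3z,Tz)\le r^2d(z,Tz)+r\,d(T^2z,z)\le 2r^2d(z,Tz)<d(z,Tz). \]
The inequality $2r^2<1$ is the ingredient missing from your sketch, and it is where the threshold $2^{-1/2}$ comes from.

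For $r>2^{-1/2}$ the route through $T^2z$ fails, since there $\theta(r)r^2+r=(2r^2+r)/(1+r)>1$. Instead one uses the two-consecutive-index lemma $d(x,Tx)\le d(x,z)+d(Tx,z)<\theta(r)(1+r)d(x,Tx)=d(x,Tx)$, applied along the original Picard sequence $u_n\to z$. Only in this regime does a subsequence limit yield $Tz=z$. Your sketch mixes the two mechanisms: the chain through $T^2z$ belongs to $r<2^{-1/2}$, and the consecutive-index idea belongs to $r\ge 2^{-1/2}$. As written, Step 3 does not establish that $z$ is a fixed point.
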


	In fact, in Theorem \ref{Th1.13}, the contractive condition holds for all $y \in X$, except $B(x, d(x,Tx))$ for $x \neq Tx$. We can say that the map $T$ in Theorem \ref{Th1.13} is \textit{uniform complementary pointwise contractive}, denoted (uCPC).
	
	In 2024, Pant \cite{Pant} extended the Banach Contraction Principle and defined a condition that applied to contraction mappings as well as nonexpansive mappings.
	
	\begin{theorem}[see Theorem 2.1 from \cite{Pant}]
		Let \((X,d)\) be a complete metric space and \(T : X \to X \) be such that for each \(x,y \in X\) with \(x \neq Tx\) or $y \neq Ty$, we have \[d(Tx,Ty) \leq c d(x,y),\] \(0\leq c <1\). Then \(T\) has a fixed point. \(T\) has a unique fixed point iff the above condition is satisfied for each $x \neq y$ in $X$,.
	\label{Th1.14}
	\end{theorem}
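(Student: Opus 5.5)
Take any $x_0\in X$ and form the Picard sequence $x_n=T^nx_0$. If $x_n=x_{n+1}$ for some $n$, then $x_n$ is a fixed point and the existence part is done. So assume $x_n\neq Tx_n=x_{n+1}$ for every $n$. The hypothesis of Theorem \ref{Th1.14} applies to every pair $(x_{n-1},x_n)$, because $x_{n-1}\neq Tx_{n-1}$. Hence
\[
d(x_n,x_{n+1})\le c\,d(x_{n-1},x_n)\le \dots \le c^n d(x_0,x_1).
\]
The usual geometric-series estimate gives $d(x_n,x_m)\le \frac{c^n}{1-c}d(x_0,x_1)$ for $m>n$. So $(x_n)$ is Cauchy, and by completeness $x_n\to z$ for some $z\in X$.

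Next I show that $z$ is fixed. Apply the contractive condition to the pair $(x_n,z)$; this is allowed because $x_n\neq Tx_n$. We get $d(x_{n+1},Tz)=d(Tx_n,Tz)\le c\,d(x_n,z)\to 0$. So $x_{n+1}\to Tz$, and uniqueness of limits gives $Tz=z$. The point to watch is that the condition is only assumed when at least one of the two points is non-fixed. The argument avoids any continuity of $T$ at $z$ by always using the non-fixed point $x_n$ in the pair.

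For the uniqueness equivalence, first suppose the condition holds for every pair $x\neq y$. If $z_1\neq z_2$ were both fixed, then $d(z_1,z_2)=d(Tz_1,Tz_2)\le c\,d(z_1,z_2)$, which forces $d(z_1,z_2)=0$, a contradiction. Conversely, suppose $T$ has a unique fixed point $z$. Take $x\neq y$. If $x$ or $y$ is non-fixed, the condition holds by hypothesis. Otherwise $x=y=z$, which contradicts $x\neq y$. So the condition holds for every pair $x\neq y$.

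The only delicate step is the identification of the limit as a fixed point without using continuity, and the pairing with $x_n$ above handles it.
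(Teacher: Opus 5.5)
Your proof is correct. The paper does not prove this statement itself, since it is quoted from Pant. Inside the paper, the existence part follows from the remark that Pant's maps are (SPC) together with Theorem \ref{Th2.25}, whose proof is the same Picard-iteration and geometric-series argument as yours.

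The one real difference is how the limit is shown to be a fixed point. You apply the hypothesis to the pair $(x_n,z)$, which is allowed because $x_n\neq Tx_n$; your preliminary case split guarantees this for every $n$. This is direct, but it uses the symmetric ``$x\neq Tx$ or $y\neq Ty$'' in Pant's condition. The paper instead argues by contradiction: it assumes $x^*\neq Tx^*$ and applies the (SPC) condition with $x=x^*$, $y=x_n$, which is allowed once $d(x_n,x^*)\le d(x^*,Tx^*)$.

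Anchoring at the limit is what lets the argument survive the weaker (SPC) hypothesis. Anchoring at $x_n$ would need $d(x_n,x^*)\le d(x_n,x_{n+1})$, but one only has $d(x_n,x^*)\le \frac{1}{1-c}\,d(x_n,x_{n+1})$. That is exactly why the (APC) condition in Section 3 allows $d(x,y)$ up to $\frac{1}{1-c}\,d(x,Tx)$, and its proof does anchor at $x_{n(k)}$ as you do.

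The uniqueness equivalence is not addressed in the paper. Your argument for both directions of it is correct.
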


	Considering both the theoretical and application importance of the Banach contraction principle, the aim of this paper is to introduce two new classes of contractive operators that include Picard-Banach contractions and a particular case. These classes are similar to (uPC), respectively (uAPC) and larger then the class of contractive mappings in Theorem \ref{Th1.14}.
	
	We then study the existence and uniqueness of fixed points and prove some strong convergence theorems for Picard iteration used to approximate the fixed points. Moreover, we present examples to illustrate the generality of our new results.
	
	\section{Significant pointwise contractions}
	
	\begin{definition}
		Let \((X,d)\) be a metric space. A mapping \(T : X \to X \) is said to be \textit{significant pointwise contractive} (SPC) if  there exists \(c \in [0,1)\) such that 
		\begin{equation}
			d(x,y) \leq d(x,Tx) \quad \text{implies} \quad d(Tx,Ty) \leq c d(x,y), 
		\label{*}
		\end{equation}
		for all \(x,y \in X\).
	\end{definition}

	\begin{remark}
		It is obvious that (C) are (SPC) and that the contractive mappings in Pant's Theorem \ref{Th1.14} are (SPC).
	\end{remark}

	The following examples show that the class of (SPC) mappings is different from the class of (PC) and (uPC) mappings and it is not included in the class of (PC) and (uPC) mappings.
	
	\begin{example}
		Consider $X = [0,1]$ with the standard distance and define \(T : X \to X \) by \(Tx=x\) for every \(x \in X\). Then \(X\) is compact, while \(T\) is (SPC), but it is not (PC).
	\end{example}

	\begin{example}
		Consider $X = [0,1] \cup [2,3]$ with the standard distance and define \(T : X \to X \) by \(Tx=0\) if \(x \in [0,1]\) and \(Tx=1\) if \(x \in [2,3]\). Then \(T\) is (uPC), but it is not (SPC). Indeed, \(d(T1,T2) = 1 \geq d(1,2)\) and \(d(1,2) = 1 \leq d(1,T1)\).
	\end{example}

	\begin{theorem}
		Every (SPC) is continuous.
	\end{theorem}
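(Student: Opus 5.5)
Fix $x \in X$ and a sequence $(x_n)$ with $x_n \to x$; we show $Tx_n \to Tx$. The plan is to split into two cases according to whether $x$ is a fixed point. The point is that (\ref{*}) is anchored at $x$: it controls $d(Tx,Ty)$ for all $y$ in the closed ball of radius $d(x,Tx)$ about $x$.

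\emph{Case 1: $x \neq Tx$.} Put $r = d(x,Tx) > 0$. For all sufficiently large $n$ we have $d(x,x_n) < r = d(x,Tx)$. Applying (\ref{*}) with the pair $(x, x_n)$ then gives
\[ d(Tx,Tx_n) \leq c\, d(x,x_n) \longrightarrow 0. \]
So in this case $T$ is even Lipschitz on a neighborhood of $x$, and continuity at $x$ follows at once.

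\emph{Case 2: $Tx = x$.} Now the ball around $x$ has radius $0$, so (\ref{*}) anchored at $x$ only gives the trivial case $y = x$. We therefore anchor at $x_n$ instead.
\begin{itemize}
\item If $x_n$ is itself a fixed point, then $d(Tx_n,Tx) = d(x_n,x)$ and there is nothing to prove.
\item If $d(x_n,x) \leq d(x_n,Tx_n)$, then (\ref{*}) applied to the pair $(x_n,x)$ gives $d(Tx_n,x) = d(Tx_n,Tx) \leq c\, d(x_n,x)$.
\item Otherwise $d(x_n,Tx_n) < d(x_n,x)$, and the triangle inequality gives
\[ d(Tx_n,x) \leq d(Tx_n,x_n) + d(x_n,x) < 2\, d(x_n,x). \]
\end{itemize}
In every subcase $d(Tx_n,Tx) \leq 2\, d(x_n,x) \to 0$, which proves continuity at $x$.

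The only real obstacle is Case 2, where the hypothesis at $x$ is vacuous. The fix is the observation that the dichotomy $d(x_n,x) \leq d(x_n,Tx_n)$ or not always yields a bound: either (\ref{*}) applies from $x_n$, or $Tx_n$ is already close to $x_n$, which is itself close to $x$. This argument uses no completeness and none of the earlier theorems, only (\ref{*}) and the triangle inequality. As a by-product, $T$ is locally Lipschitz with constant at most $\max\{c,2\} = 2$ near fixed points and $c$ elsewhere.
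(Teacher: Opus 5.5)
Your proof is correct and is essentially the paper's. Case 1 coincides. The paper's Case 2 is your dichotomy in contradiction form: assuming $d(Tx_{n_k},x)>\varepsilon$ and $d(x_{n_k},x)<\varepsilon/2$, the triangle inequality forces $d(x_{n_k},Tx_{n_k})>d(x_{n_k},x)$, so (\ref{*}) applies at $x_{n_k}$. Your direct version has the advantage of giving the explicit bound $d(Tx_n,Tx)\le 2\,d(x_n,x)$.

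Your closing Lipschitz aside overreaches, though. Near a non-fixed point the $c$-Lipschitz claim on a small ball is true, but it needs one more triangle-inequality step that you did not give. At a fixed point $x$ you proved only the anchored bound $d(Ty,Tx)\le 2\,d(y,x)$, and $T$ need not be $2$-Lipschitz on any neighbourhood of $x$. For example, take $X=\{0\}\cup\bigcup_{k\ge1}\{\lambda_k/10,\ \lambda_k,\ 2\lambda_k,\ 29\lambda_k/10\}\subset\mathbb{R}$ with $\lambda_k=100^{-k}$. The map fixing $0$, $\lambda_k/10$, $29\lambda_k/10$ and sending $\lambda_k\mapsto\lambda_k/10$, $2\lambda_k\mapsto 29\lambda_k/10$ is (SPC) for any $c$, yet $d(T\lambda_k,T(2\lambda_k))=2.8\,d(\lambda_k,2\lambda_k)$. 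What your dichotomy does give, applied to an arbitrary pair $y,z$, is that every (SPC) is globally $3$-Lipschitz, which by itself proves the theorem.
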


	\begin{proof}
		Let \(x_0\) be a point in \(X\).  If \(x_0\) is isolated, then, clearly, \(T\) is continuous at \(x_0\). 
		
		Let now \(x_0\) be an accumulation point. Then there exists a sequence \(\{x_n\}\) in \(X\) such that \(\{x_n\} \to x\). 
		
		If \(x \neq Tx\), for \(n\) sufficiently large we have \(d(x_n,x) \leq d(x,Tx)\), so by (\ref{*}) we obtain \(d(Tx_n,Tx) \leq d(x_n,x)\). Taking the limit as \( n \to \infty \), we get \( d(Tx_n,Tx) \to 0\), thus \( T\) is continuous at \(x\).
		
		If \( x= Tx \), we suppose that \(T\) is not continuous at \(x\). Then, there exist \(\varepsilon > 0\) and a subsequence \(\{x_{n(k)}\}\) of \(\{x_n\}\) such that \(d(Tx_{n(k)}, x) > \varepsilon\), for every \(k \geq 1\). Since \(x_{n(k)} \to x\), there exists \(k_0 \geq 1\) such that \(d(x_{n(k)},x) < \frac{\varepsilon}{2}\), for \(k \geq k_0\). Then, we have 
		\[d(x_{n(k)}, Tx_{n(k)}) \geq d(Tx_{n(k)},x) - d(x_{n(k)},x)> \frac{\varepsilon}2 > d(x_{n(k)},x).\] By (\ref{*}), we obtain \(d(Tx_{n(k)},Tx) \leq c d(x_{n(k)},x)\).
		
		Then, 
		\[0< \varepsilon < c d(x_{n(k)},x) < c \frac{\varepsilon}{2}.\]
		
		Since \(c <1\), we have a contradiction. Therefore, \(T\) is continuous at \(x\).
	\end{proof}

	\begin{theorem}
		Let \((X,d)\) be a complete metric space and \(T : X \to X \) be an (SPC) mapping. Then, \(T\) has a fixed point.
		\label{Th2.25}
	\end{theorem}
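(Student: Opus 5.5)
Fix an arbitrary $x_0\in X$ and consider the Picard sequence $x_{n+1}=Tx_n$. If $x_n=x_{n+1}$ for some $n$, then $x_n$ is already a fixed point and we are done. So assume $x_n\neq Tx_n$ for all $n$. The plan is to apply condition (\ref{*}) with $x=x_n$ and $y=Tx_n=x_{n+1}$. The hypothesis $d(x,y)\le d(x,Tx)$ then holds trivially, because both sides equal $d(x_n,x_{n+1})$. Hence (\ref{*}) gives
\[
d(x_{n+1},x_{n+2})=d(Tx_n,Tx_{n+1})\le c\, d(x_n,x_{n+1}),
\]
and by induction $d(x_n,x_{n+1})\le c^n d(x_0,x_1)$. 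The standard geometric-series estimate
\[
d(x_n,x_m)\le \frac{c^n}{1-c}\,d(x_0,x_1), \qquad m>n,
\]
shows that $\{x_n\}$ is Cauchy. Since $X$ is complete, $x_n\to z$ for some $z\in X$.

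It remains to show $Tz=z$. The cleanest route is the preceding theorem: every (SPC) is continuous. Then $x_{n+1}=Tx_n\to Tz$, and uniqueness of limits gives $Tz=z$.

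The only place needing care is the continuity proof just above, which is stated somewhat loosely (it writes $x$ where it means $x_0$). If one prefers not to rely on it, there is a direct argument. Suppose $z\neq Tz$ and set $\delta=d(z,Tz)>0$. For large $n$ we have $d(z,x_n)\le\delta$, so (\ref{*}) applied with $x=z$ and $y=x_n$ gives $d(Tz,x_{n+1})\le c\,d(z,x_n)\to0$. Hence $x_{n+1}\to Tz$, which forces $Tz=z$, a contradiction.

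This direct version uses only the first case of the continuity argument and avoids the fixed-point case entirely. I therefore expect no real obstacle. The key observation is simply that the pair $(x_n,x_{n+1})$ always satisfies the premise of (\ref{*}), so the Picard iterates contract exactly as in Banach's theorem, and this gives convergence of Picard iterations as well.
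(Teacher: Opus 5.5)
Your proposal is correct and matches the paper's proof: the paper uses the same observation that \((x_{n-1},x_n)\) always satisfies the premise of (\ref{*}), obtains the same geometric estimate and Cauchy Picard sequence, and then shows the limit is fixed by exactly your direct argument, applying (\ref{*}) with \(x=x^*\), \(y=x_n\) under the assumption \(d(x^*,Tx^*)>0\). Your alternative through the continuity theorem also works, since that theorem's proof is sound despite its typos, and your preliminary case split on \(x_n=Tx_n\) is harmless but unnecessary.
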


	\begin{proof}
		Let \(x_0 \in X\) be arbitrarily chosen and let \(\{x_n\}_{n=0}^{\infty}\) be the Picard iteration, i.e. the sequence defined by \(x_n = Tx_{n-1} = T^nx_0\). 
		
		Since \(d(x_{n-1},x_n) = d(x_{n-1}, Tx_{n-1})\) for all \(n \geq 1\), taking \(x=x_{n-1}\) and \(y=x_n\) in (\ref{*}) we get 
		\[d(Tx_{n-1},Tx_n) \leq c d(x_{n-1},x_n).\]
		Hence, \(d(x_n,x_{n+1}) \leq c d(x_{n-1},x_n)\), for every \(n \geq 1\). This implies that 
		\[d(x_{n},x_{n+1}) \leq c^n d(x_{0},x_1),\]
		and then, for \(p \geq 1\), we have
		\begin{align*}
			d(x_n, x_{n+p}) &\leq \sum_{i=1}^p d(x_{n+i-1},x_{n+i}) \leq \sum_{i=1}^p c^{n+i-1} d(x_{0},x_{1})\\
			& = c^n (1+c+\dots+c^{p-1}) d(x_0,x_1) \leq \dfrac{c^n}{1-c} d(x_0,x_1).
		\end{align*}
		Since \(c^n \to 0\) as \(n \to \infty\), we obtain that \(\{x_n\}\) is a Cauchy sequence and, hence, it is convergent in the complete metric space \(X\), i.e. there exists \(x^* \in X\) such that 
		\[x^* = \lim\limits_{n\to \infty} x_n.\]
		
		Suppose that \(d(x^*,Tx^*) > 0.\) Then, for \(n\) sufficiently large we have \(d(x_n,x^*)\leq d(x^*,Tx^*)\), and by (\ref{*}) taking \(x=x^*\) and \(y=x_n\), we obtain \(d(Tx^*,Tx_n)\leq cd(x^*,x_n)\). Taking the limit as \(n \to \infty\), we get \(d(Tx^*,x^*) = 0\), which is a contradiction. Therefore, \(d(Tx^*,x^*) = 0\), so \(x^*\) is a fixed point of \(T\).
	\end{proof}
	
	\begin{remark}
		Additionally, if an (SPC) satisfies 
		\[d(x,y) \geq L d(x,Tx) \quad \text{implies} \quad d(Tx,Ty) \leq c d(x,y),\]
		for all \(x,y \in X\), where \(L > 1\), then \(T\) has a unique fixed point.
	\end{remark}

	\begin{example}
		Consider $X = \{(x,2k) : x \in [0,1],\, k \in \{1,2,\dots,n\}\}$ and \(d\) be the Euclidean metric. Then \(T:X \to X\) defined by \(T(x,2k) = \left(\frac{x}2,2k \right)\) is an (SPC) with \(c=\dfrac12\) and has \(n\) fixed points. Moreover, \(T\) does not not verify the hypothesis of Theorem \ref{Th1.14}.
	\end{example}

	\begin{example}[see Example 2.4. from \cite{Pant}]
		Consider $X = \{re^{i\theta} : 0 \leq \theta \leq 2\pi,\, r = 1,3,3^2,\dots\}$ be the self-similar family of concentric circles, each lying within larger circles having radii in geometric progression, in the \(xy\)-plane. Let \(d\) be the Euclidean metric. Define \(T:X \to X\) by \(Tz=\dfrac{z}{|z|}=\dfrac{z}{r}\). Then, \(T\) satisfies (\ref{*}) with \(c= \dfrac{1}{2}\) so $T$ is an (SPC) and each point of the unit circle \(z=e^{i\theta}\) is a fixed point while every other point is an eventually fixed point of \(T\) (i.e. there exists a positive integer \(N\) such that \(T^{n+1}x=T^nx\) for \(n\geq N\)).
	\end{example}

	\begin{definition}
		Let \((X,d)\) be a metric space. We say that \(T : X \to X \) is \textit{significant pointwise shrinking} (SPS), provided 
		\begin{equation}
			0<d(y,x)\leq d(x,Tx) \quad \text{implies} \quad d(Tx,Ty) < d(x,y),
			\label{**}
		\end{equation}
		for all \(x,y \in X\).
	\end{definition}

	\begin{theorem} 	
		Let \((X,d)\) be a compact metric space and \(T : X \to X \) be an (SPS) mapping. Then, \(T\) has a fixed point.
	\end{theorem}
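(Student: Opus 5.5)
We will prove the statement by minimizing the displacement function \(f(x)=d(x,Tx)\) over \(X\), which is the classical Edelstein strategy adapted to the weaker condition (\ref{**}).

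\textbf{Continuity of \(T\).} We first show that \(T\) is continuous, arguing as in the proof that every (SPC) is continuous, with the strict inequality of (\ref{**}) replacing the factor \(c\). Let \(x_n\to x\).

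If \(x\neq Tx\), then for large \(n\) we have \(0<d(x_n,x)\le d(x,Tx)\), or else \(x_n=x\). In either case (\ref{**}) gives \(d(Tx_n,Tx)\le d(x_n,x)\to 0\).

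If \(x=Tx\), suppose \(d(Tx_{n(k)},x)>\varepsilon\) along some subsequence, and take \(k\) so large that \(d(x_{n(k)},x)<\varepsilon/2\). Then \(x_{n(k)}\neq x\), and
\[
d(x_{n(k)},Tx_{n(k)})\ge d(Tx_{n(k)},x)-d(x_{n(k)},x)>\varepsilon/2>d(x_{n(k)},x)>0.
\]
Applying (\ref{**}) with the pair \((x_{n(k)},x)\) yields \(d(Tx_{n(k)},x)<d(x_{n(k)},x)<\varepsilon/2\), which is a contradiction.

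Hence \(f\) is continuous, and by compactness of \(X\) it attains its minimum at some point \(z\in X\).

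\textbf{The minimizer is a fixed point.} Suppose \(Tz\neq z\). Take \(x=z\) and \(y=Tz\) in (\ref{**}): the hypothesis \(0<d(Tz,z)\le d(z,Tz)\) holds trivially. We therefore obtain
\[
f(Tz)=d(Tz,T^2z)<d(z,Tz)=f(z),
\]
which contradicts the minimality of \(z\). Hence \(Tz=z\).

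\textbf{Main obstacle.} The only delicate step is the continuity of \(T\) at fixed points, because there (\ref{**}) cannot be used with \(x\) as the base point, since \(d(x,Tx)=0\). It is handled above by using \(x_{n(k)}\) as the base point instead. The fixed-point step itself is immediate, since the choice \(y=Tx\) always satisfies the hypothesis of (\ref{**}).
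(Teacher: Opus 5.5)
Your proof is correct and follows the same Edelstein-type strategy as the paper: minimize the displacement $d(x,Tx)$, and use that $d(Tx,T^2x)<d(x,Tx)$ whenever $x\neq Tx$. The two proofs are organized differently, though. The paper never proves that an (SPS) map is continuous. Instead it takes a sequence $\{x_n\}$ with $d(x_n,Tx_n)\to\beta=\inf_{x\in X} d(x,Tx)$ and uses compactness to get $x_n\to u$ and $Tx_n\to v$. It rules out $\beta>0$ by applying (\ref{**}) with base points $x_{n(k)}$, whose displacements stay near $\beta>0$; this gives $v=Tu$ and hence $d(Tu,T^2u)<\beta$, a contradiction. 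Finally, knowing $u=v$, it shows $u=Tu$ by applying (\ref{**}) with base point $u$. These are the same two mechanisms as in your continuity argument: a base point on the sequence when its displacement is bounded away from $0$, and a base point at the limit when the limit is not fixed. The paper uses them only at the single limit point $u$. Your version isolates them in a standalone lemma, that every (SPS) map is continuous, which is the analogue of the paper's continuity theorem for (SPC). After that, the extreme value theorem and the choice $y=Tz$ finish the proof in one line. This is more modular and avoids the paper's somewhat tangled case split ($x_{n(k)}\neq u$ along a subsequence versus $x_n=u$ eventually). The paper's route is self-contained and extracts from (\ref{**}) only what is needed at one point.
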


	\begin{proof}
		For \(x\neq Tx\) we have \(0<d(x,Tx)\leq d(x,Tx)\) so by (\ref{**}) we have \(d(Tx,T^2x) < d(x,Tx)\). Let \(\beta = \inf \{d(x,Tx): x \in X\}.\)
		Since \(X\) is compact, there exists a sequence \(\{x_n\}_{n=0}^{\infty}\) in \(X\) such that \(x_n \to u \in X\), \(Tx_n \to v \in X\) and \(\beta = \lim\limits_{n\to \infty} d(x_n,Tx_n) = d (u,v)\). 
		
		Suppose \(\beta > 0\). If there exists a subsequence \(\{x_{n(k)}\}\) of \({x_n}\) such that \(x_{n(k)}\neq u\) for all \(k \geq 1\), we have \(0<d(x_{n(k)},u) \leq d(x_{n(k)},Tx_{n(k)})\) for \(k\) sufficiently large. By (\ref{**}) we get \(d(Tx_{n(k)}, Tu) < d(x_{n(k)},u)\)  . Taking the limit as \(k \to \infty\), we have \(d(v,Tu) \leq 0\), so \(v=Tu\). 
		
		Otherwise, we have \(x_n=u\), for \(n\) sufficiently large, by where \(Tx_n = Tu\) and \(\beta = \lim\limits_{n\to \infty} d(x_n,Tx_n) = d (u,Tu)\). But \(d(Tu,T^2u) < d(u,Tu) = \beta,\) which is a contradiction.
		
		Therefore, \(\beta = 0\), hence \(u=v\). 
		
		Suppose \(d(u,Tu) > 0\). Then, for \(n\) sufficiently large \(d(u,x_n) \leq d(u,Tu)\). By (\ref{**}), we obtain that \(d(Tu,Tx_n) < d(u,x_n)\). Taking the limit as \(n \to \infty\), we have \(d(Tu,u) = 0\), which is a contradiction. Hence, \(d(u,Tu)=0\), i.e. \(u\) is a fixed point of \(T\).
	\end{proof}

	\begin{example}
		Consider $X = [0,1] \cup [2,3]$ with the standard distance. Let \(T : X \to X \) be defined by \(Tx=1\) if \(x \in [0,1]\) and \(Tx=2\) if \(x \in [2,3]\). Then, \(X\) is compact and \(T\) is (SPS).
		
		Indeed, for \(x,y \in [0,1]\), or \(x,y \in [2,3]\), \(x\neq y\) we have\(d(Tx,Ty) = 0 < d(x,y)\). If \(x \in [0,1]\) and \(y \in [2,3]\), we get \(d(Tx,Ty) = 1\) and \(d(x,y) \geq 1\), with equality when \(x=1\) and \(y=2\). However, in this case we have \(d(x,Tx) =0\), \(d(y,Ty) = 0\), so \(d(x,y) > d(x,Tx)\) and \(d(y,x) > d(y,Ty)\). Hence, \(T\) satisfies (\ref{**}). Moreover, we have \(d(T1,T2) = d(1,2)\), so \(T\) is not (C).
	\end{example}

	\begin{remark}
		Additionally, if an (SPS) satisfies
		\[d(x,y) \geq L d(x,Tx) \quad \text{and} \quad x\neq y \quad \text{implies} \quad d(Tx,Ty) \leq c d(x,y),\]
		for all \(x,y \in X\), where \(L > 1\), then \(T\) has a unique fixed point.
	\end{remark}
	
	\section{Annular pointwise contraction}
	
	\begin{definition}
		Let \((X,d)\) be a metric space. A mapping \(T : X \to X \) is said to be \textit{annular pointwise contractive} (APC) if  there exists \(c \in [0,1)\) such that 
		\begin{equation}
			\dfrac{1}{1+c}d(x,Tx) \leq d(x,y)\leq \dfrac{1}{1-c}d(x,Tx) \quad \text{implies} \quad d(Tx,Ty) \leq c d(x,y), 
			\label{***}
		\end{equation}
		for all \(x,y \in X\).
	\end{definition}
	
	\begin{remark}
		It is obvious that (C) are (APC) and that the contractive mappings in Pant's Theorem \ref{Th1.14} are (APC).
	\end{remark}
	
	\begin{example}
		Consider $X = \{0,1,2,3\}$ with the standard distance. Let \(T : X \to X \) be defined by \(T0=T2= 0\) and \(T1=T3=1\). Then \(T\) is (APC), but \(T\) is not (SPC) and \(T\) does not satisfy the hypothesis in Pant's Theorem \ref{Th1.14}.
		
		Indeed, for \(x=0\) or \(x=1\), (\ref{***}) is obvious. If \(x=2\), since \(\frac23 d(2,T2) \leq d(2,y) \leq 2d(2,T2)\), we get \(y=0\), so \(d(T2,T0)=0 < \frac12 d(2,0) = 1\). For \(x=3\), since \(\frac23 d(3,T3) \leq d(3,y) \leq 2d(3,T3)\), we get \(y=0\) or \(y=0\). But \(d(T3,T1)=0 < \frac12 d(3,1) = 1\). Moreover, \(d(1,2) = 1 \leq d(2,T2) = 2\) and \(d(T1,T2)=1=d(1,2)\), so \(T\) is not (SPC) and does not satisfy the hypothesis in Pant's Theorem \ref{Th1.14}. Also, since \(\dfrac{1}{1+c}d(0,T0) = 0 \leq d(0,1)\) and \(d(T0,T1) > cd(0,1)\), \(T\) does not verify the hypothesis of Suzuki's theorem.

	\end{example}

	\begin{theorem}
		Let \((X,d)\) be a complete metric space and \(T : X \to X \) be a (APC) mapping. Then, \(T\) has a fixed point.
	\end{theorem}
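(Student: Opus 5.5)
The plan is to run the Picard iteration, as in the proof of Theorem \ref{Th2.25}, and then handle the limit with a Suzuki-type alternating argument. Fix \(x_0\in X\) and set \(x_n=T^nx_0\), \(d_n=d(x_n,x_{n+1})\). If some \(d_n=0\), then \(x_n\) is a fixed point and we are done, so assume \(d_n>0\) for all \(n\).

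First, the pair \(x=x_n\), \(y=x_{n+1}\) always satisfies the hypothesis of (\ref{***}). Indeed \(d(x,y)=d(x,Tx)\), and \(\frac{1}{1+c}\leq 1\leq \frac{1}{1-c}\). Hence \(d_{n+1}\leq c\,d_n\), so \(d_n\leq c^n d_0\). Exactly as in Theorem \ref{Th2.25}, \(\{x_n\}\) is Cauchy and converges to some \(x^*\in X\). Moreover
\[d(x_n,x^*)\leq \sum_{k\geq n} d_k\leq \sum_{j\geq 0}c^j d_n=\frac{1}{1-c}\,d(x_n,Tx_n).\]
So the upper half of the annulus condition holds for the pair \(x=x_n\), \(y=x^*\), for every \(n\).

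The main obstacle is the lower bound \(\frac{1}{1+c}d(x_n,Tx_n)\leq d(x_n,x^*)\). Note that we cannot use \(x^*\) as the centre: \(d(x_n,x^*)\to 0\), so \(x_n\) eventually leaves the annulus around \(x^*\). Instead, I claim that for every \(n\), at least one of the following holds:
\[d(x_n,x^*)\geq \frac{d_n}{1+c}\qquad\text{or}\qquad d(x_{n+1},x^*)\geq \frac{d_{n+1}}{1+c}.\]
Suppose both fail. Then
\[d_n\leq d(x_n,x^*)+d(x_{n+1},x^*)<\frac{d_n+d_{n+1}}{1+c}\leq\frac{d_n+c\,d_n}{1+c}=d_n,\]
which is a contradiction. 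Consequently there are infinitely many indices \(n_k\) for which \(x=x_{n_k}\), \(y=x^*\) satisfy both inequalities of the annulus. For these, (\ref{***}) gives
\[d(x_{n_k+1},Tx^*)\leq c\,d(x_{n_k},x^*).\]

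Finally, letting \(k\to\infty\), the right-hand side tends to \(0\) and \(x_{n_k+1}\to x^*\). Therefore \(d(x^*,Tx^*)=0\), so \(x^*\) is a fixed point of \(T\) obtained as the limit of the Picard iteration.
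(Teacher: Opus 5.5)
Your proof is correct and follows essentially the same route as the paper: Picard iteration with $d_{n+1}\le c\,d_n$, the tail bound $d(x_n,x^*)\le \frac{1}{1-c}\,d(x_n,Tx_n)$, the triangle-inequality argument showing that for each $n$ either $n$ or $n+1$ satisfies the lower annulus bound, and then passing to the limit along the resulting subsequence. Your write-up is cleaner than the paper's, which has small index slips, e.g.\ $c\,d(x_{n+1},x_n)$ written where $c\,d(x_{n-1},x_n)$ is meant.
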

	
	\begin{proof}
		Let \(x_0 \in X\) be arbitrarily chosen and let \(\{x_n\}_{n=0}^{\infty}\) be the Picard iteration, i.e. the sequence defined by \(x_n = Tx_{n-1} = T^nx_0\). 
		
		Since \(\dfrac{d(x_{n-1},x_n)}{1+c} = d(x_{n-1}, Tx_{n-1}) \leq \dfrac{d(x_{n-1},x_n)}{1-c}\) for all \(n \geq 1\), taking \(x=x_{n-1}\) and \(y=x_n\) in \ref{***} we get 
		\[d(Tx_{n-1},Tx_n) \leq c d(x_{n+1},x_n).\]
		Hence, \(d(x_n,x_{n+1}) \leq c d(x_{n-1},x_n)\), for every \(n \geq 1\). Like in the proof of Theorem \ref{Th2.25}, we obtain that \(\{x_n\}\) is a Cauchy sequence. Since \(X\) is complete we have \(x_n \to x^* \in X\).
		
		If there exists \(n\geq 1\) such that 
		\[\dfrac1{1+c}d(x_{n-1},x_n) > d(x_{n-1},x^*) \quad \text{and} \quad \dfrac1{1+c}d(x_{n},x_{n+1}) > d(x_{n},x^*),\]
		then,
		\begin{align*}
			d(x_{n-1},x_n) &\leq d(x_{n-1},x^*)+d(x_{n},x^*) < \dfrac1{1+c}[d(x_{n-1},x_n)+d(x_{n},x_{n+1})]\\
			&\leq \dfrac1{1+c}[d(x_{n-1},x_n)+cd(x_{n-1},x_n)] = d(x_{n-1},x_n),
		\end{align*}
		which is a contradiction. Therefore, we have
		\[\dfrac{1}{1+c}d(x_{n-1},x_n) \leq d(x_{n-1},x^*) \quad\text{or} \quad \dfrac1{1+c}d(x_{n},x_{n+1})< d(x_{n},x^*).\]
		
		Since 
		\begin{align*}
			d(x_{n+p}, x_{n}) &\leq \sum_{i=0}^{p-1} d(x_{n+i+1},x_{n+i}) \leq \sum_{i=0}^{p-1} c^{i} d(x_{n+1},x_{n})\\
			& =(1+c+\dots+c^{p-1}) d(x_{n+1},x_{n}) < \dfrac{1}{1-c} d(x_{n+1},x_{n}),
		\end{align*}
		taking the limit as \(p \to \infty\), we get \(d(x^*,x_n) \leq \dfrac{1}{1-c} d(x_{n+1},x_n)\), for all \(n \geq 0\).
		
		Therefore, there exists a subsequence \(\{x_{n(k)}\}\) of \(\{x_n\}\) such that 
		\[\dfrac1{1+c}d(x_{n(k)},x_{n(k)}+1) \leq d(x_{n(k)},x^*) \leq \dfrac{1}{1-c}d(x_{n(k)},x_{n(k)}+1),\]
		for every \(k \geq 1\). Then \(d(Tx_{n(k)},Tx^*) \leq c d(x_{n(k)},x^*)\), by where we get, as \(k\to \infty\), that \(d(x^*, Tx^*) = 0\), i.e. \(x^*\) is a fixed point of \(T\).
	\end{proof}

	\begin{remark}
		If, additionally, an (APC) satisfies the condition
		\[d(x,y) \geq L d(x,Tx) \quad \text{implies} \quad d(Tx,Ty) \leq c d(x,y),\]
		for all \(x,y \in X\), where \(L \geq \dfrac{1}{1-c}\), then \(T\) has a unique fixed point.
	\end{remark}
	
	\section{Conclusions}
	
	In the end, let us summarize the main contributions of the current paper.
	
	\begin{enumerate}
		\item We introduce the class of significant pointwise contractions, which includes Picard-Banach contractions and also contain nonexpansive mappings.
		\item We show that in a complete metric space, any significant pointwise contraction has a fixed point that can be approximated by means of Picard iterations.
		\item We present relevant examples to show that the class of significant pointwise contractions is different from the class of piecewise contractions and is not included in it.
		\item It is worth mentioning that any significant pointwise contraction is continuous and can have many fixed points.
		\item We introduce the class of significant pointwise shrinkings, which includes the class of shrinkings, and show that any significant pointwise shrinking has a fixed point in a compact metric space.
		\item We also introduce the class of annular pointwise contractions, which includes Picard-Banach contractions and is different from the class of Suzuki mappings.
		\item Last but not least, we obtain a fixed point theorem for annular pointwise contractions and give an additional condition under which we have a fixed point result.
		\item Moreover, the results contained in the present paper open the way to new interesting results and generalizations of Kannan contractions (see \cite{Popescu1}) or Chatterjea contractions (see \cite{Popescu2}), and many other results.
	\end{enumerate}

	\medskip
	\vspace{1.2ex}
	
\end{document}